\theoremstyle{plain}
\newtheorem{thrm}{Theorem}[section]
\newtheorem{lemma}[thrm]{Lemma}
\begin{document}
% begin top matter
% ***************** macroes needed for this paper ************************
\newcommand{\sn}{\mathbb{S}^{n-1}}
\newcommand{\SL}{\mathcal L^{1,p}( D)}
\newcommand{\Lp}{L^p( Dega)}
\newcommand{\py}{  \partial_{y}^a}
\newcommand{\La}{\mathscr{L}_a}
\newcommand{\CO}{C^\infty_0( \Omega)}
\newcommand{\Rn}{\mathbb R^n}
\newcommand{\Rm}{\mathbb R^m}
\newcommand{\R}{\mathbb R}
\newcommand{\Om}{\Omega}
\newcommand{\Hn}{\mathbb H^n}
\newcommand{\aB}{\alpha B}
\newcommand{\eps}{\ve}
\newcommand{\BVX}{BV_X(\Omega)}
\newcommand{\p}{\partial}
\newcommand{\IO}{\int_\Omega}
\newcommand{\bG}{\boldsymbol{G}}
\newcommand{\bg}{\mathfrak g}
\newcommand{\bz}{\mathfrak z}
\newcommand{\bv}{\mathfrak v}
\newcommand{\Bux}{\mbox{Box}}
\newcommand{\e}{\ve}
\newcommand{\X}{\mathcal X}
\newcommand{\Y}{\mathcal Y}
\newcommand{\W}{\mathcal W}
\newcommand{\la}{\lambda}
\newcommand{\vf}{\varphi}
\newcommand{\rhh}{|\nabla_H \rho|}
\newcommand{\Ba}{\mathcal{B}_\beta}
\newcommand{\Za}{Z_\beta}
\newcommand{\ra}{\rho_\beta}
\newcommand{\n}{\nabla}
\newcommand{\vt}{\vartheta}
\newcommand{\its}{\int_{\{y=0\}}}

\numberwithin{equation}{section}

\newcommand{\RN} {\mathbb{R}^N}
\newcommand{\Sob}{S^{1,p}(\Omega)}
\newcommand{\Dxk}{\frac{\partial}{\partial x_k}}
\newcommand{\Co}{C^\infty_0(\Omega)}
\newcommand{\Je}{J_\ve}
\newcommand{\beq}{\begin{equation}}
\newcommand{\bea}[1]{\begin{array}{#1} }
\newcommand{\eeq}{ \end{equation}}
\newcommand{\ea}{ \end{array}}
\newcommand{\eh}{\ve h}
\newcommand{\Dxi}{\frac{\partial}{\partial x_{i}}}
\newcommand{\Dyi}{\frac{\partial}{\partial y_{i}}}
\newcommand{\Dt}{\frac{\partial}{\partial t}}
\newcommand{\aBa}{(\alpha+1)B}
\newcommand{\GF}{\psi^{1+\frac{1}{2\alpha}}}
\newcommand{\GS}{\psi^{\frac12}}
\newcommand{\HFF}{\frac{\psi}{\rho}}
\newcommand{\HSS}{\frac{\psi}{\rho}}
\newcommand{\HFS}{\rho\psi^{\frac12-\frac{1}{2\alpha}}}
\newcommand{\HSF}{\frac{\psi^{\frac32+\frac{1}{2\alpha}}}{\rho}}
\newcommand{\AF}{\rho}
\newcommand{\AR}{\rho{\psi}^{\frac{1}{2}+\frac{1}{2\alpha}}}
\newcommand{\PF}{\alpha\frac{\psi}{|x|}}
\newcommand{\PS}{\alpha\frac{\psi}{\rho}}
\newcommand{\ds}{\displaystyle}
\newcommand{\Zt}{{\mathcal Z}^{t}}
\newcommand{\XPSI}{2\alpha\psi \begin{pmatrix} \frac{x}{\left< x \right>^2}\\ 0 \end{pmatrix} - 2\alpha\frac{{\psi}^2}{\rho^2}\begin{pmatrix} x \\ (\alpha +1)|x|^{-\alpha}y \end{pmatrix}}
\newcommand{\Z}{ \begin{pmatrix} x \\ (\alpha + 1)|x|^{-\alpha}y \end{pmatrix} }
\newcommand{\ZZ}{ \begin{pmatrix} xx^{t} & (\alpha + 1)|x|^{-\alpha}x y^{t}\\
     (\alpha + 1)|x|^{-\alpha}x^{t} y &   (\alpha + 1)^2  |x|^{-2\alpha}yy^{t}\end{pmatrix}}
\newcommand{\norm}[1]{\lVert#1 \rVert}
\newcommand{\ve}{\varepsilon}
\newcommand{\D}{\operatorname{div}}
\newcommand{\G}{\mathscr{G}}

\title[space-time propagation]{On the forward in time propagation of zeros  in fractional heat type problems}

\author{Agnid Banerjee}
\address{Tata Institute of Fundamental Research\\
Centre For Applicable Mathematics \\ Bangalore-560065, India}\email[Agnid Banerjee]{agnidban@gmail.com}

\author{Nicola Garofalo}

\address{Dipartimento d'Ingegneria Civile e Ambientale (DICEA)\\ Universit\`a di Padova\\ Via Marzolo, 9 - 35131 Padova,  Italy}
\vskip 0.2in
\email{nicola.garofalo@unipd.it}

\thanks{A. Banerjee  is supported in part  by Department of Atomic Energy,  Government of India, under
project no.  12-R \& D-TFR-5.01-0520. N. Garofalo is supported in part by a BIRD grant: ``Aspects of nonlocal operators via fine properties of heat kernels", Univ. of Padova, 2022. }

%
% 
% AMS information
%
\keywords{}
\subjclass{35A02, 35B60, 35K05}

\maketitle

\begin{abstract}
In this short note we prove that if $u$ solves  $(\partial_t - \Delta)^s u = Vu$ in $\R^n_x \times \R_t$, and vanishes to infinite order at a point $(x_0, t_0)$, then $u \equiv 0$ in $\R^n_x \times \R_t$. This sharpens (and completes) our earlier result that proves $u(\cdot, t) \equiv 0$ for $t \leq t_0$ if it vanishes to infinite order at $(x_0, t_0)$. 
\end{abstract}

\section{Introduction and Statement of the main result}
Given a parameter $0<s<1$ we consider the following nonlocal equation in space time $\R^{n+1}= \R^n_x \times \R_t$  
\begin{equation}\label{e0}
(\partial_t - \Delta)^s u = Vu,
\end{equation} 
where $V$ satisfies the following structural assumptions
\begin{equation}\label{vasump}
\begin{cases}
||V||_{C^{1}(\R^{n+1})}    \le K,\ \ \ \ \ \text{if}\ 1/2\le s < 1,
\\
\\
||V||_{C^{2}(\R^{n+1})},\ \ \ \ \   ||<\nabla_x V, x>||_{L^\infty(\R^{n+1})} \le K,\ \ \ \ \  \text{if}\ 0< s < 1/2.
\end{cases}
\end{equation}
The following strong backward uniqueness result was proven in \cite[Theorem 1.2]{BG} (we refer to Section \ref{pr} for the precise definitions and notations).

\noindent \textbf{Theorem A.}\label{BG}
\emph{Let $u\in  \operatorname{Dom}(H^{s})$ be a solution to \eqref{e0}  in $\R^{n+1}$ with $V$ satisfying \eqref{vasump}. If $u$ vanishes to infinite order backward in time at some point $(x_0, t_0)$ in $\R^{n+1}$ in the following sense
\begin{equation}\label{vp}
\underset{Q_{r}(x_0, t_0)}{\operatorname{essup}}\ |u| = O(r^N),\end{equation}
 then $u(\cdot,t) \equiv 0$ for all $t \leq t_0$}.

Theorem \hyperref[T:bbm]{A} was proved by combining a basic monotonicity result of an adjusted Poon type functional for the extension problem associated to the operator $(\partial_t - \Delta)^s$ with a blowup analysis of the so-called Almgren type rescalings of a solution to such an extension problem. Theorem \hyperref[T:bbm]{A} however does not shed any light on the  forward propagation of zeros. I.e., it does not address the question whether $u( \cdot, t) \equiv 0$ for $t>t_0$. In this note we provide an affirmative answer to such a question by an elementary variational argument inspired to some ideas in \cite{DG}. Our main result is as follows.

\begin{thrm}\label{main}
Let $u\in  \operatorname{Dom}(H^{s})$ be a solution to \eqref{e0}  in $\R^{n+1}$ with $V$ satisfying \eqref{vasump}. If $u$ vanishes to infinite order backward in time at some point $(x_0, t_0)$ in $\R^{n+1}$ in the sense of \eqref{vp} above,
 then $u \equiv 0$ in $\R^{n+1}$.  
\end{thrm}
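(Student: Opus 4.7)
The plan is to combine the backward conclusion of Theorem~A with a forward-in-time energy argument on the parabolic Caffarelli--Silvestre extension.

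First, Theorem~A yields $u(\cdot,t)\equiv 0$ for all $t\le t_0$. By uniqueness of the extension problem associated to $(\partial_t-\Delta)^s$, the corresponding extension $U(x,y,t)$ also vanishes identically on the slab $\{t\le t_0\}$. In particular $U(\cdot,\cdot,t_0)\equiv 0$, so the task reduces to propagating this zero Cauchy datum forward in time for the degenerate parabolic extension equation, subject to the Robin-type boundary condition $-\lim_{y\to 0^+} y^a\partial_y U = c_s Vu$ on $\{y=0\}$.

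I would then introduce the weighted energy
\begin{equation*}
E(t)=\int_{\R^n\times\R_+} U(x,y,t)^2\, y^a\, dx\, dy,
\end{equation*}
differentiate in $t$ using the extension PDE, and integrate by parts in $(x,y)$ to obtain, up to an inessential positive constant,
\begin{equation*}
\tfrac12\, E'(t) = -\int_{\R^n\times\R_+} y^a |\nabla U|^2\, dx\, dy + c_s\int_{\R^n} V(x,t)\, u(x,t)^2\, dx.
\end{equation*}
The bound $\|V\|_\infty\le K$ from \eqref{vasump}, together with a weighted trace inequality of the form $\|u(\cdot,t)\|_{L^2(\R^n)}^2 \le C\bigl(E(t)+\int y^a|\nabla U|^2\bigr)$, should allow one to absorb the positive boundary contribution into the negative Dirichlet term and conclude with a differential inequality $E'(t)\le C_K\, E(t)$ for $t\ge t_0$. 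Combined with $E(t_0)=0$, Gr\"onwall then forces $E\equiv 0$ on $[t_0,\infty)$, whence $U\equiv 0$ and, taking boundary traces, $u(\cdot,t)\equiv 0$ for $t\ge t_0$. Together with Theorem~A, this gives $u\equiv 0$ on all of $\R^{n+1}$.

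The main obstacle I anticipate is making the absorption step honest: the trace constant is universal while $K$ is arbitrary, so a direct absorption need not succeed in one shot. I would therefore either (i) first rescale via $u(x,t)\mapsto u(\lambda x,\lambda^{2s}t)$ to make the effective potential as small as desired, and then propagate the resulting local forward vanishing globally by an iteration in time; or (ii) localize the energy by a spatial cutoff $\varphi$, handle the additional commutator/error terms on the support of $\nabla\varphi$, and build in an exponential time weight $e^{-\Lambda t}$ to absorb the constant $K$. A secondary subtlety is justifying the integration by parts and the finiteness of $E(t)$; this should be routine from $u\in\operatorname{Dom}(H^s)$ and the regularity of $V$ in \eqref{vasump}, but merits care given the degenerate weight $y^a$.
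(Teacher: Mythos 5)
Your overall strategy is the paper's: invoke Theorem~A to get $U(\cdot,\cdot,t_0)\equiv 0$, then run a forward-in-time $L^2$ energy/Gr\"onwall argument on the extension, controlling the boundary term $\int_{\{y=0\}}VU^2$ by a weighted trace inequality that is absorbed into the Dirichlet term. The difference is the choice of functional, and this is where your proposal has a genuine gap. The flat energy $E(t)=\int_{\R^n\times\R_+}U^2\,y^a\,dx\,dy$ is not known to be finite: $u\in\operatorname{Dom}(H^s)$ only gives $u\in L^2(\R^{n+1})$, and the extension satisfies $\|U(\cdot,\cdot,y)\|_{L^2}\le\|u\|_{L^2}$ with no quantitative decay in $y$, so the $dy$-integral against $y^a$ can diverge; likewise the boundary terms at spatial infinity in your integration by parts are uncontrolled. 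You flag this as a ``secondary subtlety,'' but it is the central technical point. The paper's fix is to replace $E(t)$ by the Gaussian-weighted functional $\phi(R)=\int U^2(X,R^2)\,\mathscr G(X,X_0,T-R^2)\,y^a\,dX$, where $\mathscr G$ is the Neumann heat kernel \eqref{funda} of the extension operator. The Gaussian decay of $\mathscr G$ in both $x$ and $y$ makes every integral converge and kills all boundary terms at infinity; the fact that $\mathscr G$ solves the backward equation with zero weighted Neumann data \eqref{neu} makes the computation close up exactly as in your display; and the Dirac property \eqref{delta} recovers $U(X_0,T)^2$ in the limit $R\to\sqrt T$.

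On the absorption issue you worry about: it is actually a non-issue once the trace inequality is used in its $\varepsilon$--Young form. The inequality \eqref{e5} (from \cite{ABDG}) reads
\begin{equation*}
\Big|\int_{\{y=0\}}VU^2\mathscr G\,dx\Big|\le C_1\Big((T-R^2)^{-\frac{1+a}{2}}\int U^2\mathscr G\,y^a+(T-R^2)^{\frac{1-a}{2}}\int|\nabla U|^2\mathscr G\,y^a\Big),
\end{equation*}
and the gradient coefficient is made smaller than the available $-4R\int|\nabla U|^2\mathscr G y^a$ simply by taking the time horizon $T$ small (at the price of the harmless, integrable singularity $(T-R^2)^{-\frac{1+a}{2}}$ in the zeroth-order coefficient, which Gr\"onwall tolerates). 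So neither of your proposed workarounds is needed; in particular the rescaling in (i) is written with the wrong exponent --- the parabolic scaling preserving $(\partial_t-\Delta)^s$ is $(x,t)\mapsto(\lambda x,\lambda^2 t)$, under which the equation becomes $H^su_\lambda=\lambda^{2s}V_\lambda u_\lambda$ --- and the cutoff scheme in (ii) would reintroduce exactly the uncontrolled error terms at infinity that the Gaussian weight is designed to eliminate. With the functional replaced by $\phi(R)$ and the trace inequality quoted above, your argument becomes the paper's proof.
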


The paper is organized as follows. In Section \ref{pr} we introduce some basic notations  and gather some preliminary results that are relevant to our work. In Section \ref{s:m} we prove Theorem \ref{main}.

%%%%%%%%%%%%%%%%%%%%%%%%%%%%%%%%%%%%%%%%%%%%%%

\section{Preliminaries}\label{pr}

Given a function $f\in L^1(\Rn)$, we denote by $\hat f$ its Fourier transform defined by
\[
\hat f(\xi) = \mathcal F_{x\to \xi}(f) = \int_{\Rn} e^{-2\pi i<\xi,x>} f(x) dx.
\]
A typical point in $\R^{n+1} = \Rn_x \times \R_t$ will be indicated by $(x,t)$. 
The heat operator in $\R^{n+1}$ will be denoted by $H = \p_t - \Delta_x$. Given a number $s\in (0,1)$ the notation $H^s$ will indicate the fractional power of $H$ that in  \cite[formula (2.1)]{Sam} was defined on a function $f\in \mathscr S(\R^{n+1})$ by the formula
\begin{equation}\label{sHft}
\widehat{H^s f}(\xi,\sigma) = (4\pi^2 |\xi|^2 + 2\pi i \sigma)^s\  \hat f(\xi,\sigma),
\end{equation}
with the understanding that we have chosen the principal branch of the complex function $z\to z^s$. The natural domain for $H^s$ is the  parabolic Sobolev space of fractional order $2s$ defined as follows 
\begin{align}\label{dom}
\mathscr H^{2s} & =  \operatorname{Dom}(H^s)   = \{f\in \mathscr S'(\R^{n+1})\mid f, H^s f \in L^2(\R^{n+1})\}
\\
&  = \{f\in L^2(\R^{n+1})\mid (\xi,\sigma) \to (4\pi^2 |\xi|^2 + 2\pi i \sigma)^s  \hat f(\xi,\sigma)\in L^2(\R^{n+1})\},
\notag
\end{align} 
where the second equality is justified by \eqref{sHft} and Plancherel theorem. 
It turns out that the  definition \eqref{sHft} is equivalent to the one based on Balakrishnan formula (see \cite[(9.63) on p. 285]{Samko})
\begin{equation}\label{balah}
H^s f(x,t) = - \frac{s}{\Gamma(1-s)} \int_0^\infty \frac{1}{\tau^{1+s}} \big(P^H_\tau f(x,t) - f(x,t)\big) d\tau,
\end{equation}
where we have denoted by
\begin{equation}\label{evolutivesemi}
P^H_\tau f(x,t) = \int_{\Rn} G(x-y,\tau) f(y,t-\tau) dy = G(\cdot,\tau) \star f(\cdot,t-\tau)(x)
\end{equation}
the \emph{evolutive semigroup}, see \cite[(9.58) on p. 284]{Samko}, i.e., the solution $u((x,t),\tau) = 
P^H_\tau f(x,t)$ of the Cauchy problem in $\R^{n+1}_{(x,t)}\times \R^+_\tau$
\[
\p_\tau u = \Delta_x u - \p_t u,\ \ \ \ \ \ u((x,t),0) = f(x,t).
\]
We next consider the thick half-space $\R^{n+1}_{(x,t)} \times \R^+_y $. At times it will be convenient to indicate points $(x,t),y)$ in such space by combining the extension variable $y>0$ with $x\in \Rn$, and denote the generic point in the thick space $\R^{n+1}_+ = \Rn_x\times\R^+_y$ with the letter $X=(x,y)$. For notational ease $\nabla U$ and  $\operatorname{div} U$ will respectively refer to the quantities  $\nabla_X U$ and $ \operatorname{div}_X U$.  The partial derivative in $t$ will be denoted by $\p_t U$ and also at times  by $U_t$. The partial derivative $\partial_{x_i} U$  will be denoted by $U_i$. At times,  the partial derivative $\partial_{y} U$  will be denoted by $U_{n+1}$. 

We next introduce the extension problem associated with $H^s = (\partial_t - \Delta)^s$ is as follows. Given a number $a\in (-1,1)$ and a $u:\R^n_x\times \R_t\to \R$ we seek a function $U:\R^n_x\times\R_t\times \R_y^+\to \R$ that satisfies the degenerate Dirichlet problem
\begin{equation}\label{la}
\begin{cases}
\La U \overset{def}{=} \partial_t (y^a U) - \operatorname{div} (y^a \nabla U) = 0,
\\
U((x,t),0) = u(x,t),\ \ \ \ \ \ \ \ \ \ \ (x,t)\in \R^{n+1}.
\end{cases}
\end{equation}
The most basic property of the problem \eqref{la} is that if $s = \frac{1-a}2\in (0,1)$, then for any $u \in \mathscr H^{2s}$ one has in $L^2(\R^{n+1})$
\begin{equation}\label{np}
2^{-a}\frac{\Gamma(\frac{1-a}{2})}{\Gamma(\frac{1+a}{2})} \py U((x,t),0)=  - H^s u(x,t),
\end{equation}
where $\py$ denotes the weighted normal derivative
\begin{equation}\label{nder}
\py U((x,t),0)\overset{def}{=}   \operatorname{lim}_{y \to 0^+}  y^a \partial_y U((x,t),y).
\end{equation}
When $a = 0$, equivalently $s = 1/2$, the problem \eqref{la} was first introduced in \cite{Jr1} by Frank Jones, who in such case also constructed the relevant Poisson kernel and proved \eqref{np}. More recently Nystr\"om and Sande in \cite{NS} and Stinga and Torrea in \cite{ST} have independently extended the results in \cite{Jr1} to all $a\in (-1,1)$. 

In view of \eqref{e0}, \eqref{la}, \eqref{np} and of \cite[Corollary 4.6]{BG} we know that, for a given $u \in \mathscr H^{2s}$ solution to \eqref{e0} (with $V$ satisfying \eqref{vasump}), the extended function $U$ in \eqref{la} is a weak solution of the following problem 
\begin{equation}\label{wk}
\begin{cases}
\La U=0 \ \ \ \ \ \ \ \ \ \ \ \ \ \ \ \ \ \ \ \ \ \ \ \ \ \ \ \ \ \ \text{in}\ \R^{n+1}\times \R^+_y,
\\
U((x,t),0)= u(x,t)\ \ \ \ \ \ \ \ \ \ \ \ \ \ \ \ \text{for}\ (x,t)\in \R^{n+1},
\\
\py U((x,t),0)=  2^{a} \frac{\Gamma(\frac{1+a}{2})}{\Gamma(\frac{1-a}{2})} V(x,t) u(x,t)\ \ \ \ \text{for}\ (x,t)\in \R^{n+1}.
\end{cases}
\end{equation} 
Further, in \cite[Lemma 5.3]{BG} the following regularity result for such weak solutions was proved (for the precise notion of parabolic H\"older spaces $H^\alpha$, we refer to \cite[Chapter 4]{Li}).

\begin{lemma}\label{reg1}
Let $U$  be a weak solution of \eqref{wk} corresponding to $u \in \mathscr{H}^{2s}$ where $V$ satisfies the growth condition in \eqref{vasump}. Then there exists $\alpha>0$ such that one has up to the thin set $\{y=0\}$ 
\[
U_i,\ U_t,\ y^a U_y\ \in\ H^{\alpha}(\overline{\R^{n+1}_+} \times \R),\ \ \ \  i=1,2,..,n.
\]
\end{lemma}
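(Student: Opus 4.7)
By Theorem~A we may assume $u(\cdot,t)\equiv 0$ for $t\le t_0$, so it remains to propagate this vanishing \emph{forward} in time via an elementary weighted $L^2$-energy argument on the extension $U$ from \eqref{wk}, in the spirit of the variational approach of \cite{DG}. A first observation is that the Poisson representation for the extension problem \eqref{la} is causal in $t$: the solution $U((x,t),y)$ is obtained by convolving $\{u(\cdot,s)\}_{s\le t}$ against the evolutive heat kernel in $x$ together with a suitable profile in $y$ (cf.\ \cite{NS,ST}), so $u(\cdot,s)\equiv 0$ for $s\le t_0$ forces $U\equiv 0$ on $\R^n_x\times(-\infty,t_0]\times\R^+_y$. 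In particular $U(\cdot,t_0,\cdot)\equiv 0$, and via \eqref{wk} the Neumann datum $\py U((x,t),0)=c_a V(x,t)u(x,t)$ vanishes for all $t\le t_0$.

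\textbf{Weighted energy identity.} Fix $T>t_0$ and let $\varphi_R(X)$ be a smooth radial cutoff equal to $1$ on $B_R\subset\R^{n+1}_+$, supported in $B_{2R}$, with $|\nabla\varphi_R|\le C/R$. Define
\[
E_R(T)=\tfrac12\int_{\R^{n+1}_+} y^a\, U(X,T)^2\,\varphi_R(X)^2\, dX.
\]
Lemma~\ref{reg1} supplies enough regularity of $U_t$ and $y^a U_y$ to differentiate in $T$ under the integral sign and to integrate by parts in $X$ using $\La U=0$; together with the boundary datum $\py U=c_a Vu$ this yields
\[
E_R'(T)=-\int_{\R^{n+1}_+} y^a|\nabla U|^2\varphi_R^2\,dX + c_a\int_{\R^n_x} V\,u^2\,\varphi_R^2\,dx + \mathrm{Err}_R(T),
\]
where $\mathrm{Err}_R(T)$ is the commutator with $\nabla\varphi_R$ and is $O(R^{-1})$ against the full weighted Dirichlet energy of $U$ on $B_{2R}\setminus B_R$.

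\textbf{Trace absorption and Gronwall.} The weighted trace embedding $S^{1,2}(\R^{n+1}_+,y^a\,dX)\hookrightarrow L^2(\R^n_x)$ (standard since $y^a$ is an $A_2$ weight for $a\in(-1,1)$) gives, for every $\eps>0$,
\[
\|u\,\varphi_R\|_{L^2(\R^n_x)}^2 \le \eps\int y^a|\nabla U|^2\varphi_R^2\,dX + C(\eps)\int y^a U^2\varphi_R^2\,dX + O(R^{-2}).
\]
Combined with $|V|\le K$ from \eqref{vasump}, choosing $\eps$ small enough to absorb the weighted Dirichlet term produces $E_R'(T)\le C\,E_R(T)+o_R(1)$. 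Since $E_R(t_0)=0$ by the first step, Gronwall's inequality followed by $R\to\infty$ forces $U(\cdot,T,\cdot)\equiv 0$ for every $T>t_0$, whence $u\equiv 0$ on $\R^{n+1}$.

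\textbf{Main obstacle.} The delicate point is the coupling in \eqref{wk} between the bulk PDE on the thick half-space and the potential term $Vu$ that lives only on the thin set $\{y=0\}$: this coupling is what makes the trace-absorption step essential, and the unboundedness of $\R^{n+1}_+$ forces a cutoff whose commutator error $\mathrm{Err}_R$ must be controlled without uniform-in-$T$ bounds on the weighted Dirichlet energy over the annular region. Overcoming it requires exploiting the global $L^2(\R^{n+1})$ integrability built into $u\in\operatorname{Dom}(H^s)$ together with the H\"older regularity of $y^a U_y$ afforded by Lemma~\ref{reg1}, which jointly yield the decay of $\mathrm{Err}_R$ needed to close the Gronwall loop.
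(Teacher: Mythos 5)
Your proposal does not prove the statement at issue. Lemma \ref{reg1} is a \emph{regularity} assertion: for a weak solution $U$ of the degenerate Neumann problem \eqref{wk} it claims parabolic H\"older continuity, up to the thin set $\{y=0\}$, of the tangential derivatives $U_i$, of $U_t$, and of the weighted normal derivative $y^a U_y$. What you have written is instead a sketch of the forward--in--time propagation of zeros, i.e.\ of Theorem \ref{main}, and it explicitly \emph{invokes} Lemma \ref{reg1} as an ingredient (``Lemma \ref{reg1} supplies enough regularity \dots to differentiate in $T$ under the integral sign''). This is circular with respect to the task and, more importantly, off target: an energy/Gronwall argument of the type you describe can only produce a vanishing or uniqueness statement for $U$, never H\"older bounds on its derivatives, so the conclusion you reach ($u\equiv 0$) is of an entirely different nature from the conclusion required ($U_i,\ U_t,\ y^a U_y\in H^{\alpha}(\overline{\R^{n+1}_+}\times\R)$ for some $\alpha>0$). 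No repair along the lines of cutoffs, trace absorption, or Gronwall can close this gap.

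For completeness: the paper itself does not prove Lemma \ref{reg1}; it quotes it from \cite[Lemma 5.3]{BG}. A genuine proof runs through Schauder/De Giorgi--Nash--Moser type estimates for the degenerate operator $\La U=\partial_t(y^aU)-\operatorname{div}(y^a\nabla U)$, whose weight $y^a$ is in the Muckenhoupt class $A_2$ for $a\in(-1,1)$: one differentiates the equation in the tangential directions $x_i$ and in $t$ (the incremental quotients again solve an equation of the same type with Neumann datum controlled by the $C^1$, resp.\ $C^2$, bounds on $V$ in \eqref{vasump}), and one observes that $y^aU_y$ solves the conjugate equation with weight $y^{-a}$ and vanishing, resp.\ prescribed, boundary datum $\py U((x,t),0)=Vu$. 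That machinery, not a variational uniqueness argument, is what the lemma requires.
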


We next recall that it was shown in \cite{Gcm} that given $\phi \in C_0^{\infty}(\R^{n+1}_+)$ the solution of the Cauchy problem with Neumann condition
\begin{align}\label{CN}
\begin{cases}
\La U=0 \hspace{2mm} &\text{in}  \hspace{2mm}\R^{n+1}_+ \times (0,\infty)\\
U(X,0)=\phi(X), \hspace{2mm} &X \in \R^{n+1}_+,\\
\py U(x, 0, t) = 0 \hspace{2mm} & x\in \Rn,\ t \in (0, \infty)
\end{cases}
\end{align}
is given by the formula 
\begin{equation}\label{Ga}
\mathscr P^{(a)}_t \phi(X_1) \overset{def}{=} U(X_1,t) = \int_{\R^{n+1}_+} \phi(X) \G (X_1,X,t) y^a dX,
\end{equation}
where 
\begin{equation}\label{funda}
\G(X_1,X,t) = p(x_1,x,t)\ p^{(a)}(y_1,y,t)
\end{equation}
is the product of the standard Gauss-Weierstrass kernel $p(x_1,x,t) = (4\pi t)^{-\frac n2} e^{-\frac{|x_1-x|^2}{4t}}$ in $\Rn\times\R^+$ with the heat kernel of the Bessel operator $\mathscr B_a = \p_{yy} + \frac ay \p_y$  with Neumann boundary condition in $y=0$ on $(\R^+,y^a dy)$ (reflected Brownian motion)
 \begin{align}\label{fs}
p^{(a)}(y_1,y,t) & =(2t)^{-\frac{a+1}{2}}\left(\frac{y_1 y}{2t}\right)^{\frac{1-a}{2}}I_{\frac{a-1}{2}}\left(\frac{y_1 y}{2t}\right)e^{-\frac{y_1^2+y^2}{4t}}.
\end{align}
In \eqref{fs} we have denoted by $I_{\frac{a-1}{2}}$ the modified Bessel function of the first kind and order $\frac{a-1}{2}$ defined by the series 
\begin{align}\label{besseries}
 I_{\frac{a-1}{2}}(z) = \sum_{k=0}^{\infty}\frac{(z/2)^{\frac{a-1}{2}+2k} }{\Gamma (k+1) \Gamma(k+1+(a-1)/2)}, \hspace{4mm} |z| < \infty,\; |\operatorname{arg} z| < \pi.
\end{align}
For future use we note explicitly that \eqref{funda} and \eqref{fs} imply that for every $x, x_1\in \Rn$ and $t>0$ one has
\begin{equation}\label{neu}
\operatorname{lim}_{y \to 0^+}  y^a \partial_y \G((x,y),(x_1,0),t) =0. 
\end{equation}
It is important for the reader to keep in mind that \eqref{Ga} defines a stochastically complete semigroup (see \cite[Propositions 2.3 and 2.4]{Gcm}), and therefore in particular we have for every $X_1\in \R^{n+1}_+$ and $t>0$ 
\begin{equation}\label{Ga1}
\mathscr P^{(a)}_t 1(X_1) = \int_{\R^{n+1}_+} \G (X_1,X,t) y^a dX = 1,
\end{equation}
and also \begin{equation}\label{delta} \mathscr P^{(a)}_t \phi(X_1) \underset{t\to 0^+}{\longrightarrow} \phi(X_1).\end{equation}

\section{Proof of Theorem \ref{main}}\label{s:m}

\begin{proof}[Proof of Theorem \ref{main}]
Let $u$ be as in the statement of Theorem \ref{main}, and denote by $U$ the corresponding solution to the problem \eqref{wk}. According to the choice of notation in the introduction, in the sequel we write $U(X,t)$, instead of $U((x,t), y)$ whenever convenient. Without loss of generality, we assume that $(x_0, t_0) =(0,0)$. From the above cited Theorem \hyperref[T:bbm]{A} in \cite{BG} it follows that  if \eqref{vp} holds at $(0,0)$, then $U(\cdot, t) =0$ for all $t \leq 0$, and thus in particular
\begin{equation}\label{zero}
U(\cdot,0)\equiv 0.
\end{equation}
Also, to simplify the notation we henceforth indicate with $V(x,t)$ the function $2^{a} \frac{\Gamma(\frac{1+a}{2})}{\Gamma(\frac{1-a}{2})} V(x,t)$ in the right-hand side of the third equation in \eqref{wk}, which thus becomes
\begin{equation}\label{new}
\py U((x,t),0)=  V(x,t) u(x,t)\ \ \ \ \text{for}\ (x,t)\in \R^{n+1}.
\end{equation}
Following an idea in \cite{DG}, for a fixed $X_0 = (x_0,y_0) \in \R^{n+1}_+$ and a number $T >0$ to be subsequently chosen suitably small, we define for $0 < R < \sqrt{T}$
\begin{equation}\label{phi}
\phi(R)= \int_{\R^{n+1}_+}  U^2(X, R^2) \mathscr G(X,X_0,T-R^2) y^a dX.
\end{equation}
Differentiating  \eqref{phi} one has
\begin{align}\label{e1}
& \phi'(R) = 4 R \int   U U_t( X,R^2) \mathscr G y^a - 2R \int   U^2\partial_t \mathscr G y^a.
\end{align} 
The differentiation under the integral sign is justified by the regularity estimates in Lemma \ref{reg1} and by an approximation argument similar to that in Section 6 in \cite{BG}. Using now the equation satisfied by $\G$ we find
\begin{equation}\label{e2}
\phi'(R)= 4R \int  U U_t( X,R^2) \mathscr G y^a - 2R \int U^2 \operatorname{div}(y^a \nabla \G) y^a.
\end{equation}
Integrating by parts in the second integral in \eqref{e2}, and also using \eqref{wk} (which is now \eqref{new}) and \eqref{neu}, we find
\begin{align}\label{e4}
&\phi'(R)= - 4R \int  U \La U  \mathscr G y^a - 4R  \int |\nabla U|^2 \G y^a  - 4R \int_{\{y=0\}} VU^2 \G dx\\
& = - 4R  \int |\nabla U|^2 \G y^a  - 4R \int_{\{y=0\}} VU^2 \G dx.\notag\end{align}
The boundary integral in \eqref{e4} is now estimated as in \cite[formula (3.14)]{ABDG} in the following way
\begin{equation}\label{e5}
\left|4R \int_{\{y=0\}} VU^2 \G dx \right| \leq C_1  R \left( (T-R^2)^{-\frac{1+a}{2}} \int U^2 \G y^a + (T-R^2)^{\frac{1-a}{2}} \int |\nabla U|^2 \G y^a \right),
\end{equation}
where $C_1= C_1(n,a, ||V||_{L^{\infty}})>0$. Since one has trivially
\[
C_1 R (T-R^2)^{\frac{1-a}{2}} \int |\nabla U|^2 \G y^a \leq C_1 R T^{\frac{1-a}{2}} \int |\nabla U|^2 \G y^a,
\]
by choosing $T>0$ sufficiently small we can ensure that
\begin{equation}\label{e8}
C_1 R T^{\frac{1-a}{2}} \int |\nabla U|^2 \G y^a <   4R  \int |\nabla U|^2 \G y^a.
\end{equation}
Using \eqref{e5} and  \eqref{e8}  in  \eqref{e4}, we finally deduce that for $T>0$ sufficiently small and $0<R<\sqrt T$, one has  
\begin{equation}\label{f1}
\phi'(R) \leq  C_1 R (T-R^2)^{-\frac{1+a}{2}} \phi(R).
\end{equation}
It follows from \eqref{f1} that, for the new constant $C = \frac{C_1}{(1-a)}>0$, the function
$$F(R) \overset{def}=  e^{C (T-R^2)^{\frac{1-a}{2}}} \phi(R)$$ is monotonically decreasing. Since \eqref{zero} implies $\phi(0) =0$, we conclude that
\[
F(R) \leq F(0) =  e^{CT^{\frac{1-a}{2}}} \phi(0) =0.
\]
Thus $ F\equiv 0$ on $[0,\sqrt T)$. If we now let $R \to  \sqrt{T}$, and use the Dirac $\delta$ property of $\G$ in \eqref{delta}, we find that
\[
\operatorname{lim}_{R \to \sqrt{T}}   F(R) =  U(X_0, T)^2 =0.
\]
By the arbitrariness of $X_0 \in \R^{n+1}_+$ we infer that $U \equiv 0$ in $\R^{n+1}_+ \times (0, T)$. Repeating the above arguments on successive intervals $(T, 2T)$ and so on, we finally obtain that
$U \equiv 0$ in $\R^{n+1}_+ \times \R$. The desired conclusion now follows since $U(x, 0, t) = u(x,t)$. 

\end{proof}

\end{document}